\numberwithin{equation}{section}
\newtheorem{thm}{Theorem}[section]
\newtheorem{example}[thm]{Example}
\newcommand{\bsa}{\boldsymbol{a}}
\newcommand{\bsb}{\boldsymbol{b}}
\newcommand{\bsu}{\boldsymbol{u}}
\newcommand{\bsx}{\boldsymbol{x}}
\newcommand{\bsz}{\boldsymbol{z}}
\newcommand{\rd}{\mathrm{d}}
\title{On approximating the shape of  one dimensional functions}
\author{Chaitanya Joshi, Paul T. Brown and Stephen Joe}
\begin{document}
\maketitle

%
%
%

\begin{abstract}

Consider an $s$-dimensional function being evaluated at $n$ points of a low discrepancy sequence (LDS), where the objective is to approximate the one-dimensional functions that result from  integrating out $(s-1)$ variables. Here, the emphasis is on accurately approximating the shape of such \emph{one-dimensional} functions. Approximating this shape when the function is evaluated on a set of grid points instead is relatively straightforward. However, the number of grid points needed increases exponentially  with $s$.  LDS are known to be increasingly more efficient at integrating $s$-dimensional functions compared to grids, as $s$ increases. Yet, a method to approximate the shape of a one-dimensional  function when the function is evaluated using an $s$-dimensional LDS has not been proposed thus far. We propose an approximation method for this problem. This method is based on an $s$-dimensional integration rule together with fitting a polynomial smoothing function. We state and prove results showing conditions under which this polynomial smoothing function will converge to the true one-dimensional function. We also demonstrate the computational efficiency of the new approach compared to a grid based approach.
\end{abstract}



\section{Introduction}\label{intro}

While the focus of this paper is purely mathematical, we start by first outlining the motivation behind solving this particular problem and providing the context for the specific examples discussed.

\subsection{Motivation} \label{motivate}
This work is motivated by an application in Bayesian statistical inference where there is an interest in the one-dimensional posterior distributions. While, Monte Carlo based methods such as the Markov Chain Monte Carlo or the Approximate Bayesian Computation  are more widely used to approximate posterior distributions, these can be computationally expensive. Methods that instead explore the posterior distribution using a deterministic set of points  --- for example, using grid points \cite{rue09} and using central composite design (CCD) points  \cite{martin13} --- have been proposed as computationally efficient alternatives. However, since the number of grid points increases exponentially with $s,$ grid based methods can only be used when the (hyper) parameter space has very few dimensions  \cite{rue09}. Using CCD points is more efficient however, finding one dimensional distributions is then not straightforward. Existing numerical integration free methods can only approximate uni-modal distributions \cite{martin13}. Therefore, there is potential to explore the use of LDS to approximate the posterior distributions instead since such approximations could be more computationally efficient as well as accurate compared to those obtained using grid points or CCD points. However, as yet, there is no method to approximate one dimensional marginals using an LDS.\\

In this paper we develop a method to approximate the shape of the one-dimensional functions when an $s$-dimensional function is evaluated using $N$ LDS points. However, the focus of this paper is purely mathematical. It is not expected that the method developed here can be used to approximate Bayesian posterior distributions in its existing form. We expand more on this point in Section \ref{discussion}. In this paper we simply develop a method and prove the convergence theorems for the approximations.

\subsection{Integration Rules and Low Discrepancy Sequences}

Suppose we have an integrable function $g\,:\,[\bsa,\bsb] \rightarrow \mathbb{R}$, where $\bsa=(a_1,\ldots,a_s)$ and $\bsb=(b_1,\ldots,b_s)$ with $a_j< b_j$ for $1\le j\le s$. Without loss of generality, we may take the region of interest to be the unit hypercube $[0,1]^s$ since a linear transformation may be used to map a function~$g$ defined over $[\bsa,\bsb]$ to
a function $f$ defined over $[0,1]^s$. \\



\noindent Now consider the $s$-dimensional integral
\begin{equation*} \label{lds1}
I =\int_{[0,1]^{s}} f(\bsx) \,\rd\bsx.
\end{equation*}
The standard approach taken to find an approximation to $I$ is typically to make use of an integration rule. These integration rules are of the form 
\begin{equation}\label{intrule}
\hat{I}_{N} = \frac{1}{N}\sum_{i=1}^{N} f(\bsx_i),
\end{equation}
where the points $\bsx_{1},\ldots,\bsx_{N}$ are
sampled from the unit hypercube $[0,1]^{s}$. There are a number of choices for the integration rules. One can
use Monte Carlo (MC) rules in which the points are chosen randomly. However,
such points do suffer from large gaps and clusters and this can affect the
accuracy of the estimate for a given set of points \cite{lemieux09}. If the
point set was taken to be the regular $n$-point grid for which the
point set consists of the points $((i_1-1)/(n-1),(i_2-1)/(n-1),\ldots,
(i_s-1)/(n-1))$, where $1\le i_\ell \le n$ for $1\le \ell\le s$, then the
total number of points is $N=n^s$. If $s$ is large, then the number
of points increases rapidly as $n$ increases.

A large class of integration rules is the class of \emph{quasi-Monte Carlo}
(QMC) rules. These are equal weight integration rules of the form
(\ref{intrule}) that use deterministic point sets, specifically, the low discrepancy sequences (LDS).  These point sets have low \emph{discrepancy} with respect to the Lebesgue measure on a unit hypercube. One of the most commonly used discrepancy measure is called the star discrepancy. Let  $\mathcal{P}_{N}$ be an $N$ element point set in $[0,1]^{s}.$ For $\bsa  \in (0,1]^{s},$  the star discrepancy $D_{N}^{*}$ of this point set is defined as 
\begin{equation*}
D_{N}^{*} = \sup_{\bsa \in[0,1]^{s}} \left | \frac{\alpha([0, \bsa),\mathcal{P}_{N},N)}{N} - \prod_{j=1}^{s}a_{j} \right |,
\end{equation*}
where, $\alpha( [ 0, \bsa ) ,\mathcal{P}_{N} ,N) =$ \# $ \{n \in \mathbb{N}: 1 \leq n \leq N, \bsx_{n} \in [0, \bsa )\}.$ For an infinite sequence $\mathcal{P},$ the star discrepancy $D_{N}^{*}$ is the discrepancy of the first $N$ elements of $\mathcal{P}.$  A sequence of points is said to be \emph{low discrepancy} if $D_{N}^{*} \in O(N^{-1} \log(N)^{s}).$ The widely stated \emph{Koksma-Hlawka theorem} states that if the function $f$ has a variation $V(f)$ in the sense of \emph{Hardy and Krause} that is finite, then we have that $|I - \hat{I}_{N}| \leq V(f) D_{N}^{*}.$  For a general introduction to LDS, QMC rules and their applications, refer to \cite{leobacher14}, \cite{dick10} or \cite{lemieux09}. In this paper, the main QMC rules used in the numerical experiments are rank-$1$ lattice rules. These are rules in which 
\begin{equation}\label{gen} 
\bsx_i=\left\{\frac{i\bsz}{N}\right\},\quad 1\le i\le N.
\end{equation}
Here the $s$ components of $\bsz$ are integers in $\{1,2,\ldots,N-1\}$ and $\{x\}=x-\lfloor x\rfloor$ denotes the fractional part of $x\in\mathbb{R}$ which is applied component-wise for vectors. Although these are finite point sets and not sequences, the convergence rate of $O(N^{-1} \log(N)^{s})$ is still guaranteed (see, \cite{leobacher14}) . More information about lattice rules is also available in \cite{nied92} or \cite{sloan94}.\\


The three types of point sets that we discuss in this paper (grids, random points, LDS) can all be described using a common general description that we give below.\\

\noindent \textbf{ The point set $\mathcal{P}_{N}$:}\newline
\noindent In (\ref{intrule}), let the components of each $\bsx_i$ be denoted by $x_{i,j}$ for $1\le j \le s$. Let us now assume that for a fixed $j$ and $\forall i = 1,\ldots, N,$ there are $n$ distinct values of $x_{i,j}$ which we denote by $z_{k}$ for $1\le k\le n$. Here, for simplicity of notation, we have not included a $j$ subscript. Further, let us assume that there are exactly $m$ points that have the value $z_k$ for their $j^{th}$ subscript, for each $k, \, 1 \le k \le n.$
So  the total number of points $N$ satisfies $N=nm$.  Note that this description of point sets, which, from now on, we refer to as $\mathcal{P}_{N}$, in fact, covers a number of point sets including random points used for the MC integration rule. In particular, it includes an $n$-point grid and the rank-$1$ lattice rule  shown in Figure \ref{fig1b}. As seen in Figure \ref{fig1b} [a], in an $n-$ point regular grid, the points are aligned in rows and columns, each containing $n$ points. As a result, there are $n$ distinct $z_{k}$'s along each axis and $m=n.$ On the other hand as illustrated in   Figure \ref{fig1b} [b], in a rank-$1$ lattice, none of the points are aligned resulting in $n=N$ distinct $z_{k}$'s along each axis and $m=1$.\\

\begin{figure} [t]
\includegraphics[scale=0.4]{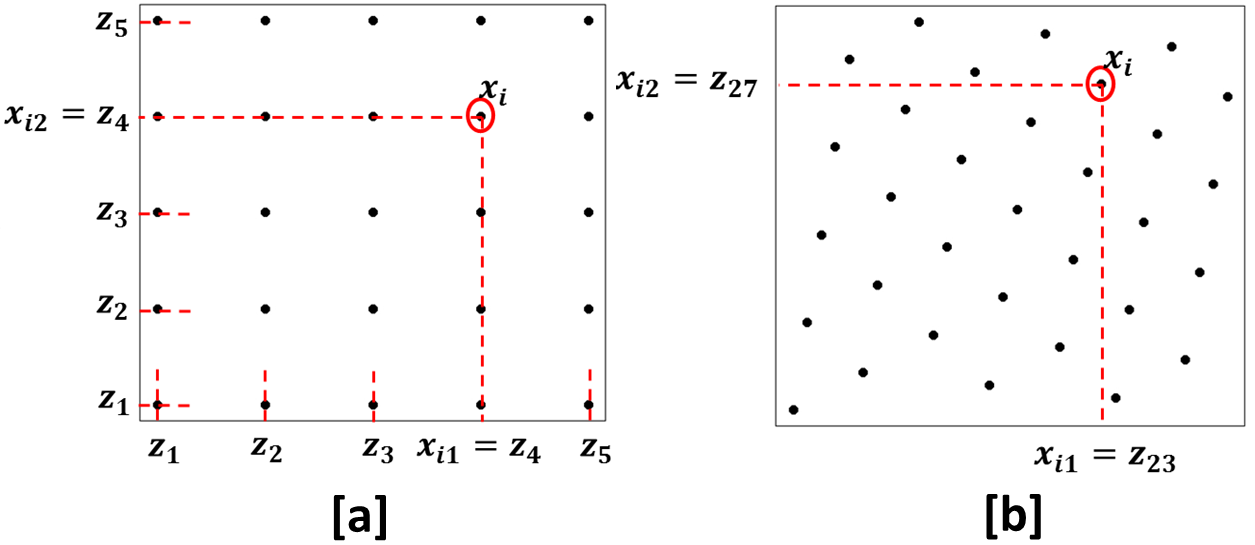}
\caption[]{[a] $5-$point grid ($m=5$) and [b] $32-$point rank-$1$ lattice ($m=1$). }
\label{fig1b}
\end{figure}

\subsection{Approximation to the one-dimensional functions using deterministic point sets} \label{1dapprox}

\begin{figure} [ht]
\includegraphics[scale=0.43]{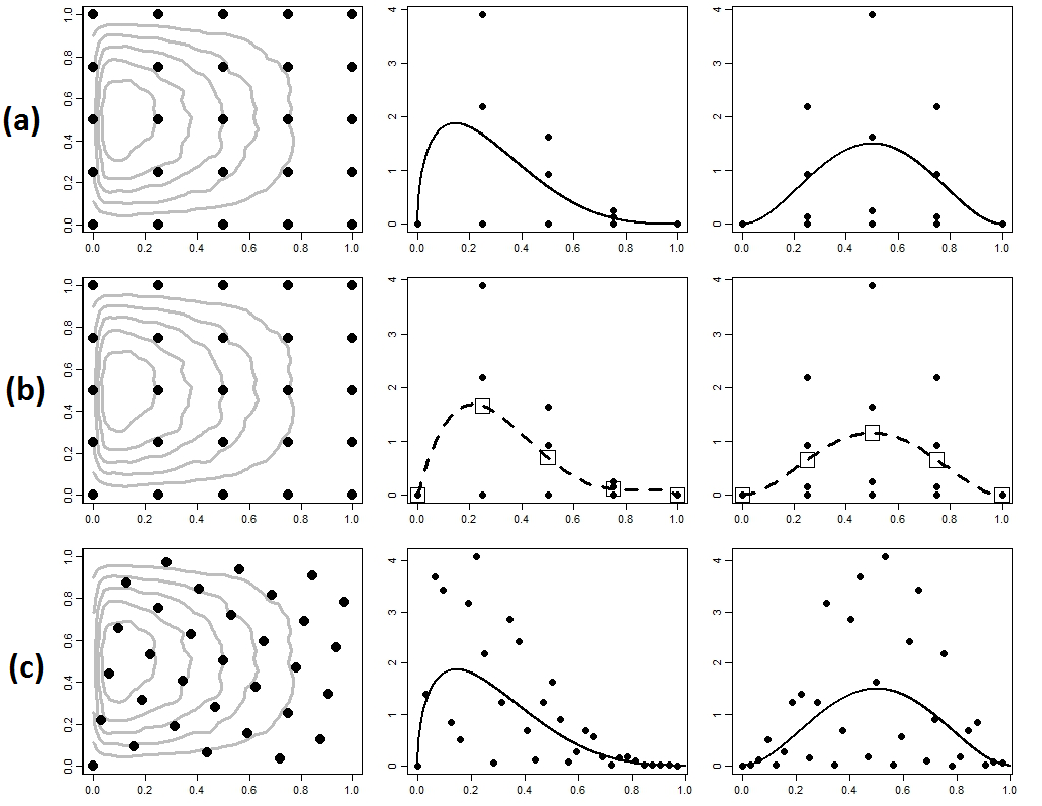}
\caption[]{\emph{First column:} Bi-variate Beta distribution contours along with the points used to approximate the one-dimensional functions:  (a),(b) $5-$point grid ($m=5$) and (c) $32-$point rank-$1$ lattice ($m=1$). \emph{Second and third columns}: the true-one dimensional functions along with the \emph{unique} orthogonal projections of the bi-variate Beta distribution for a $5-$point grid ($m=5$) in Row (a)  and a $32-$point rank-$1$ lattice ($m=1$) in Row  (c). Row (b) shows an interpolant fit through the point-wise means (squares) for the $5-$point grid. Because some of the function projections are the same, the number of points in the function projections are fewer than the total number of points shown in the first column on which the bi-variate function is evaluated. }
\label{fig1}
\end{figure}

Suppose that we are interested in approximating the functions
\begin{equation*}\label{fprob}
        f_j(x)=\int_{[0,1]^{s-1}}f(x_1,\ldots,x_{j-1},x,x_{j+1},\ldots,x_s)\,\rd\bsx_{-j},\quad x\in[0,1],
\end{equation*}
where, for a vector $\bsu=(u_1,\ldots,u_s),\, \bsu_{-j}$ denotes $(u_1,\ldots,u_{j-1},u_{j+1},\ldots,u_s),$ for $1\le j\le s$. So $f_j$ is the function obtained by integrating out all the variables of $f$ except the $j$-th one. The set of points $\{\bsx_i,\ 1\le i\le N\}$ could be obtained either by sampling randomly (MC approach) or using a $n$-point regular grid or using a QMC approach. An integration rule of the form (\ref{intrule}) can be used to approximate the one dimensiona functions. However, note that this approach does not approximate the \emph{shape} of the one dimensional function. By \emph{shape} we mean the graph of the one dimensional function (see Figure \ref{fig1}, columns 2,3, (a),(c)).\\

\begin{example}\label{grid}
As mentioned previously, the regular $n$-point grid consists of the points $((i_1-1)/(n-1),(i_2-1)/(n-1),\ldots,
(i_s-1)/(n-1))$, where $1\le i_\ell \le n$ for $1\le \ell\le s$. For the $j$-th coordinate of these $N=n^s$ points, we have
the $n$ distinct values $(i_j-1)/(n-1)$, $1\le i_j\le n$. As $N=n^s=nm$, it follows that $m=n^{s-1}$.
\end{example}

\begin{example}\label{maximal}
As mentioned previously, the points of an $\ell$-point rank-$1$ lattice rule are given by
$\{i\bsz/\ell\}$, where $\bsz\in\{1,2,\ldots \ell-1\}^s$. Now let $r$ be relatively prime with $\ell$. Then one can obtain the lattice rule with point set given by
\[
\left\{\frac{i\bsz}{\ell}+\frac{(k_1,k_2,\ldots,k_s)}{r}
\right\},\quad 1\le i\le \ell,\ 0\le k_1,k_2,\ldots,k_s\le r-1.
\]
Such a lattice rule has $N=\ell r^s$ points and is an example of a maximal rank lattice rule (for example, see \cite{sloan94}). Assuming
that all the components of $\bsz$ are relatively prime with $\ell$, then it may be shown that the $j$-th coordinate of these $N$ points consists
of the $n=\ell r$ distinct values $(i-1)/(\ell r)$ for $1\le i\le \ell r$ with each value repeated $m=r^{s-1}$ times. We note that in the $r=1$ case, the lattice rule is just a rank-$1$ lattice rule having a total of $\ell = N$ points. Moreover, the $j$-th coordinate of these points has the $N$ distinct values $z_k=(k-1)/N$ for $1\le k\le N$ with each value occurring just once (so that $N=nm$ with $n=N$ and $m=1$). In the terminology of lattice rules, the lattice rule is said to be \emph{fully projection regular} (see \cite{lemieux09},  \cite{sloan94}). This property is also clearly illustrated in Figure \ref{fig1b}.\\
\end{example}

\noindent We have that
\[
f_j(z_k) = \int_{[0,1]^{s-1}} f(x_1,\ldots,x_{j-1},z_k,x_{j+1},\ldots, x_s)\,\rd\bsx_{-j}
\]
can be approximated using numerical integration by
\begin{equation} \label{pointwise mean}
        \hat{f}_j(z_k) =
         \frac{1}{m}\sum_{\bsx_i: x_{i,j}=z_k} f(\bsx_i).
\end{equation}
So $\hat{f}_j(z_k)$ is the point-wise mean obtained by averaging out over the $m$ points, for each of whom, $ x_{i,j}=z_k$.
With these approximations to $f_j(z_k)$ for $1\le k\le n$, one can then approximate the shape of $f_j$ by fitting an interpolant  through these $n$ approximations. Note that,   $\hat{f}_j(z_k) $ can be considered as the pointwise mean of the orthogonal projections of $f(\cdot)$ on the $j^{th}$ axis. This is illustrated in Figure \ref{fig1}. An interpolant through the point-wise means of the orthogonal projections of the bi-variate Beta distribution  can approximate the shape of the one dimensional functions reasonably accurately for the $5-$point grid ($m=5$) (Figure \ref{fig1} (b)). But the  rank-$1$ lattice is fully projection regular, i.e., $m=1.$   Although such a property is advantageous for the numerical
integration of integrands over $[0,1]^s$, it is not so advantageous when trying to approximate the shape of the one dimensional functions. We would not expect the approximation to the shape of $f_j$ obtained by fitting an interpolant through the point-wise means (\ref{pointwise mean}) to be an accurate one when $m=1$. For the $32-$point rank-$1$ lattice, the point-wise means of the orthogonal projections of the bi-variate Beta distribution are the projections themselves (Figure \ref{fig1} (c)) and one can see that an interpolant that passes through each one of them would not approximate the shape of the one dimensional function very accurately at all.\\ 

\subsection{Structure of This Paper}
In Section~\ref{approxlds}, we propose a new method that involves use of an integration rule as well as fitting of a polynomial smoothing function to approximate the shape of the one dimensional function. The theoretical results will be presented in Section~\ref{convergence}. In Section~\ref{examples}, we provide some numerical results illustrating the efficiency and accuracy of the approximations produced by our new method as compared to those produced by a grid based method. Finally, we close in Section \ref{discussion} giving a summary of the work and discuss further challenges.

\section{New Method}  \label{approxlds}
Here we propose a method for approximating the shape of the one-dimensional functions
\[
f_j(x)=\int_{[0,1]^{s-1}}f(x_1,\ldots,x_{j-1},x,x_{j+1},\ldots,x_s)\,\rd\bsx_{-j}\quad x\in[0,1],
\]
when, an $s$-dimensional function $f(\boldsymbol{x})$ has been evaluated at $N$ distinct points $\boldsymbol{x}={\boldsymbol{x}_{1},\ldots, \boldsymbol{x}_{N}}$ given by a point set $\mathcal{P}_{N}$. As discussed in Section \ref{1dapprox}, an interpolant through the point-wise means may not provide an accurate approximation when using the QMC integration rules. However, a smoothing function such as a least square polynomial fitted to the projected points  may be a better option. Thus, the basic algorithm we propose is as follows:\\

\begin{underline}{\textbf{Algorithm I ($m > 1$)}} \end{underline}
\begin{enumerate}
    \item {Evaluate the function $f$ at N points $\bsx_{i}.$}
    \item{For $j = 1,\ldots, s,$ do:}
    \begin{enumerate}
	\item {Project the function evaluations $f(\boldsymbol{x}_{i})$ on the $j^{th}$ axis.}
	\item {Fit a polynomial of degree $(n-1)$ to the projections.}
    \end{enumerate}
\item{Repeat for each $j$.}
\end{enumerate}  


\noindent As in Section \ref{1dapprox},  let the components of each $\bsx_i$ be denoted by $x_{i,j}$ for $1\le j \le s, \, 1\le i\le N$.
These components together with the function evaluations may be conveniently represented in a matrix form as
\[
\boldsymbol{\Psi}_{N \times (s+1)} =
\begin{bmatrix}
        x_{1,1}  & x_{1,2} &\cdots  & x_{1,s} & f(\bsx_1)\\
        x_{2,1}  & x_{2,2} &\cdots  & x_{2,s} & f(\bsx_2)\\
        \vdots   & \vdots &\ddots & \vdots    & \vdots\\
        x_{N,1}  & x_{N,2} &\cdots  & x_{N,s} & f(\bsx_N)
\end{bmatrix}.
\]
To approximate the shape of the $j^{th}$ one-dimensional $f_{j}(x)$, we first orthogonally project $ f(\boldsymbol{x}_{i})$ on the $ j^{th}$ axis to obtain 
\[
\psi_j =
\begin{bmatrix}
        x_{1,j}  & f(\bsx_1)\\
        x_{2,j}  & f(\bsx_2)\\
        \vdots   & \vdots\\
        x_{N,j}  & f(\bsx_N)
\end{bmatrix},
\]
More formally, we can write $\psi_{j} = \boldsymbol{\Psi}P_j$,
where $P_{j}$ is the $(s+1)\times 2$ matrix with zeros everywhere
except for ones in the $j$-th position of the first column and the
last position of the second column.

\begin{example}
When $j=2$, we have
\[
        P_2=\begin{bmatrix}
                0 & 1 & 0 & \cdots & 0 & 0\\
                0 & 0 & 0 & \cdots & 0 & 1
        \end{bmatrix}^T.
\]
\end{example}

\noindent Since the spread of the projected function points is not constant (as illustrated by Figure \ref{fig1}), a weighted least square polynomial may be required where the weights are proportional to the variances. However, we prove that in this case, a weighted least square polynomial of degree $(n-1)$ is equal to the ordinary least square polynomial of the same degree. \\

\noindent Let $\underline{M}$ be the design matrix when fitting a least squares polynomial of degree $(n-1)$ through the orthogonal projections of $f(\boldsymbol{x})$ on $x_{j}$. Such a projection has $n$ unique abscissa points $z_{k},\, k=1,\ldots,n,$ as described in Section \ref{1dapprox}. Then $\underline{M}$ is of size $N \times n$, and has a block structure,
\begin{equation*} \label{eq:cbVan}
\underline{M} = 
\begin{bmatrix}
    \boldsymbol{1} & \boldsymbol{t}_1 & \boldsymbol{t}_{1}^{2} & \dots  & \boldsymbol{t}_{1}^{n-1} \\
 \boldsymbol{1} & \boldsymbol{t}_{2} & \boldsymbol{t}_{2}^{2} & \dots  & \boldsymbol{t}_{2}^{n-1} \\
    \vdots & \vdots & \vdots & \ddots & \vdots \\
    \boldsymbol{1} & \boldsymbol{t}_{n} & \boldsymbol{t}_{n}^{2} & \dots  & \boldsymbol{t}_{n}^{n-1} \\
\end{bmatrix},
\end{equation*}
where each element block $\boldsymbol{t}_{k}^{p} \in \underline{M}, (p = 0,\ldots,n-1)$ is an $m \times 1$ column vector containing only the element $z_{k}^{p}$. We can also express $\underline{M}$ as a Kronecker product of the Vandermonde matrix $M$ and the $m \times 1$ column vector of $1's$,
\begin{eqnarray*}
\underline{M} = M \otimes \boldsymbol{1}_{(m \times 1)},
\end{eqnarray*}
where, $M$ is a square Vandermonde matrix of size $n$, which is of full rank and is invertible since all elements $z_{k}$ are unique.\\

\noindent For weighted least squares, we assign a weight $w_{k}$ to all projections corresponding to a unique abscissa point $z_{k}.$  We define the weights matrix $\underline{W}$ of size $N \times n$ by
\begin{equation*}
\underline{W} = 
\begin{pmatrix}
w_1I_{m} &0I_{m}&\cdots&0I_{m} \\
0I_{m}&w_2I_{m}&\cdots&0I_{m}\\
\vdots&\vdots&\ddots&\vdots\\
0I_{m}&0I_{m}&\cdots&w_nI_{m}
\end{pmatrix},
\end{equation*}
where, $I_{m}$ is the identity matrix with size $m \times m$. $\underline{W}$ can also be expressed as a Kronecker product
\begin{equation*}
\underline{W} = W \otimes I_{m},
\end{equation*}
where $W$ is the $n \times n$ diagonal matrix of weights
\begin{eqnarray*}
W = 
\begin{pmatrix}
w_1&0&\cdots&0\\
0 &w_2&\cdots&0\\
\vdots&\vdots&\ddots&\vdots\\
0&0&\cdots&w_n
\end{pmatrix}.
\end{eqnarray*}

\noindent We will make use of the following Kronecker product properties.\\

\noindent \textbf{Lemma 1:} Properties of Kronecker products (\cite{gentle07}) \\
\begin{enumerate}
\item
{\bf{Scalar property}}: For matrices $A$ and $B$, and scalar $k$
\begin{equation*}
(kA) \otimes B = A \otimes (kB) = k(A \otimes B).
\end{equation*}
\item
{\bf{Mixed product property}}: For matrices $A, B, C,$ and $D$, such that $AC$ and $BD$ exist, then
\begin{equation*}
(A \otimes B)(C \otimes D) = AC \otimes BD.
\end{equation*}
\item
{\bf{Inverse property}}: If matrices $A$ and $B$ are invertible, then $(A \otimes B)^{-1}$ exists, and can be expressed as
\begin{equation*}
(A \otimes B)^{-1} = A^{-1} \otimes B^{-1}.
\end{equation*}
\item 
{\bf{Transposition}}: For matrices $A$ and $B$
\begin{equation*}
(A \otimes B)^{T} = A^{T} \otimes B^{T}.
\end{equation*}
\end{enumerate}

\noindent Let $\hat{f}_{j}^{WLS}$ be the weighted least square polynomial approximation of degree $(n-1)$ to the $j^{th}$ one-dimensional function $f_{j}$ and  $\hat{f}_{j}^{LS}$ be the least square polynomial approximation of the same degree. Further, let  $\underline{\hat{f}_{j}^{LS}}$ be the values taken by $\hat{f}_{j}^{LS}$ for the elements in the design matrix $ \underline{M}$. Similarly, $\underline{\hat{f}_{j}^{WLS}}.$

\begin{thm} \label{thm11}
 For any $j \in \{1,\ldots, s\}$, $\hat{f}_{j}^{WLS} = \hat{f}_{j}^{LS}.$
\end{thm}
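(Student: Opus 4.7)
The plan is to write out the normal equations for both fits and collapse them using the Kronecker structure from Lemma~1. Writing $\hat{\boldsymbol{\beta}}^{WLS}=(\underline{M}^T\underline{W}\,\underline{M})^{-1}\underline{M}^T\underline{W}\mathbf{y}$ and $\hat{\boldsymbol{\beta}}^{LS}=(\underline{M}^T\underline{M})^{-1}\underline{M}^T\mathbf{y}$, where $\mathbf{y}$ is the $N\times 1$ vector of function values arranged so that the $m$ evaluations sharing the $j$-th coordinate $z_k$ appear in contiguous blocks, the goal is to show that both coefficient vectors collapse to the same object and hence produce the same polynomial.

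First I would simplify $\underline{M}^T\underline{W}\,\underline{M}$. Using $\underline{M}=M\otimes\boldsymbol{1}_{m\times 1}$ and $\underline{W}=W\otimes I_m$ together with the transposition and mixed-product properties, this reduces to $(M^TWM)\otimes(\boldsymbol{1}^T I_m\boldsymbol{1}) = m\,(M^TWM)$. Since $M$ is a square invertible Vandermonde matrix and $W$ is diagonal with positive entries, the inverse property gives $(\underline{M}^T\underline{W}\,\underline{M})^{-1}=\tfrac{1}{m}M^{-1}W^{-1}(M^T)^{-1}$. Similarly, $\underline{M}^T\underline{W} = (M^TW)\otimes\boldsymbol{1}^T$. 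The factor $\boldsymbol{1}^T$ acts block-wise on $\mathbf{y}$ and returns $m\bar{\mathbf{y}}$, where $\bar{\mathbf{y}}=(\hat f_j(z_1),\ldots,\hat f_j(z_n))^T$ is the vector of point-wise means from~\eqref{pointwise mean}. Combining these pieces, the $W$ factors cancel and $\hat{\boldsymbol{\beta}}^{WLS}=M^{-1}\bar{\mathbf{y}}$.

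Running the same computation for the unweighted case replaces $W$ by $I_n$ at every stage and yields $\hat{\boldsymbol{\beta}}^{LS}=(M^TM)^{-1}M^T\bar{\mathbf{y}}=M^{-1}\bar{\mathbf{y}}$. Since the two coefficient vectors coincide, the polynomials $\hat f_j^{WLS}$ and $\hat f_j^{LS}$ are identical on all of $[0,1]$, which is exactly the claim. The conceptual point making this work is that $M$ is \emph{square} and invertible: the degree $(n-1)$ polynomial is chosen so that there are as many coefficients as distinct abscissae $z_k$, and so both least squares problems have a unique exact fit through the point-wise means, regardless of how the residuals at coincident abscissae are weighted.

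The only step requiring genuine care is the block-wise action of $\boldsymbol{1}^T$ on $\mathbf{y}$: I need to verify that, under the chosen ordering of rows, $(A\otimes\boldsymbol{1}^T)\mathbf{y} = mA\bar{\mathbf{y}}$ for any $n\times n$ matrix $A$. This is a direct consequence of the definition of the Kronecker product once $\mathbf{y}$ is written as $\mathrm{vec}$ of the $m\times n$ matrix whose $k$-th column lists the $m$ values of $f$ at abscissa $z_k$. Aside from this bookkeeping, the rest of the argument is a purely algebraic cancellation driven by the invertibility of $M$.
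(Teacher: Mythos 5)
Your proposal is correct and follows essentially the same route as the paper: both rest on the factorisations $\underline{M}=M\otimes\boldsymbol{1}$ and $\underline{W}=W\otimes I_m$ together with the mixed-product, inverse and transposition properties of Lemma~1, with the weights cancelling because $M$ is square and invertible. The only (minor, and arguably cleaner) difference is that you track the coefficient vectors and obtain $\hat{\boldsymbol{\beta}}^{WLS}=\hat{\boldsymbol{\beta}}^{LS}=M^{-1}\bar{\mathbf{y}}$ directly, whereas the paper compares the fitted values $\underline{\hat{f}_j^{WLS}}$ and $\underline{\hat{f}_j^{LS}}$ at the design points, both reducing to $\frac{1}{m}\bigl(I_n\otimes\boldsymbol{1}\boldsymbol{1}^T\bigr)\boldsymbol{f}$.
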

\begin{proof}  We have
\begin{eqnarray}  \label{ls1}
\nonumber\underline{\hat{f}_{j}^{LS}} &=& \underline{M}(\underline{M}^{T}\underline{M})^{-1} \underline{M}^{T} \boldsymbol{f} \\  
\nonumber 
 &=& (M \otimes \boldsymbol{1}) \left[(M \otimes \boldsymbol{1})^{T} (M \otimes \boldsymbol{1})\right]^{-1} (M \otimes 
\boldsymbol{1})^{T} \boldsymbol{f}\\ \nonumber
&=&( M(M^{T}M)^{-1}M^{T}) \otimes (\boldsymbol{1}(\boldsymbol{1}^{T}\boldsymbol{1})^{-1}\boldsymbol{1}^{T}) \boldsymbol{f}. \\ 
\nonumber
\end{eqnarray}
Since $M$ is a square Vandermonde matrix and invertible, and $\boldsymbol{1}^{T}\boldsymbol{1} = m$, we have
\begin{eqnarray}
\nonumber \underline{\hat{f}_{j}^{LS}} &=& (MM^{-1}(M^{T})^{-1}M^{T}) \otimes (\dfrac{1}{m} \boldsymbol{1} \boldsymbol{1}^{T}) \boldsymbol
{f} \\ \nonumber
&=& \dfrac{1}{m} I_n \otimes (\boldsymbol{1}\boldsymbol{1}^{T})\boldsymbol{f}. \\ 
\end{eqnarray}
We have
\begin{eqnarray} 
\nonumber \underline{\hat{f}_{j}^{WLS} }&=& \underline{M} (\underline{M}^{T}\underline{W}\underline{M})^{-1} \underline{M}^{T} 
\underline{W} \boldsymbol{f}\\
\nonumber &=& (M \otimes \boldsymbol{1}) \left((M \otimes \boldsymbol{1})^{T} (W \otimes I_{m}) (M \otimes \boldsymbol{1}) 
\right)^{-1} (M \otimes \boldsymbol{1})^{T} (W \otimes I_{m}) \boldsymbol{f}\\
\nonumber &=&( M(M^{T}WM)^{-1}M^{T}W) \otimes (\boldsymbol{1}(\boldsymbol{1}^{T} I_m \boldsymbol{1})^{-1} \boldsymbol{1}^{T} I_m) 
\boldsymbol{f}. \\ \nonumber
\end{eqnarray}
Since $W$ is also square and invertible ($W$ is a diagonal matrix, with $w_{i,i} > 0$), and $(\boldsymbol{1}^{T} I_m 
\boldsymbol{1}) = m$, we have
\begin{eqnarray}
\nonumber\underline{ \hat{f}_{j}^{WLS}} &=&( MM^{-1}W^{-1}(M^{T})^{-1}M^{T}W) \otimes (\dfrac{1}{m} \boldsymbol{1}\boldsymbol{1}^{T} 
I_m) \boldsymbol{f} \\
\nonumber &=& (I_n W^{-1} I_n W) \otimes (\dfrac{1}{m} \boldsymbol{1}\boldsymbol{1}^{T} I_m) \boldsymbol{f} \\
\nonumber &=& \dfrac{1}{m} I_n \otimes (\boldsymbol{1}\boldsymbol{1}^{T})\boldsymbol{f} = \hat{f}_{j}^{LS}(z_k).
\end{eqnarray}
\end{proof}

\noindent We can further show that $\hat{f}_{j}^{LS}$ will pass through $\hat{f}_j(z_k)$ for each $k$.

\begin{thm} \label{thm12}
 For any $j \in \{1,\ldots, s\}$, $\hat{f}_{j}^{LS}$ will pass through $\hat{f}_j(z_k)$ for  $1 \le k\le n$.
\end{thm}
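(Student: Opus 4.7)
The plan is to build directly on the identity established at the end of the proof of Theorem \ref{thm11}, namely that
\[
\underline{\hat{f}_{j}^{LS}} \;=\; \frac{1}{m}\,\bigl(I_n \otimes \boldsymbol{1}\boldsymbol{1}^{T}\bigr)\boldsymbol{f},
\]
and simply read off what this vector says about the polynomial's values at the abscissa points $z_k$.

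First, I would describe the block structure carefully. The rows of $\underline{M}$ are arranged so that the first $m$ rows correspond to the $m$ points $\bsx_i$ with $x_{i,j}=z_1$, the next $m$ rows correspond to points with $x_{i,j}=z_2$, and so on. Under this ordering, $\boldsymbol{f}$ partitions into $n$ consecutive sub-vectors of length $m$, the $k$-th of which collects the values $\{f(\bsx_i): x_{i,j}=z_k\}$.

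Next, I would observe that $I_n \otimes \boldsymbol{1}\boldsymbol{1}^{T}$ is block-diagonal with $n$ identical $m\times m$ blocks, each equal to the all-ones matrix $\boldsymbol{1}\boldsymbol{1}^{T}$. Applying this operator to $\boldsymbol{f}$ and dividing by $m$ therefore replaces every entry in the $k$-th block by the arithmetic mean of the function values in that block, which is exactly the pointwise mean $\hat{f}_j(z_k)$ defined in (\ref{pointwise mean}). Hence every entry of the $k$-th block of $\underline{\hat{f}_{j}^{LS}}$ equals $\hat{f}_j(z_k)$.

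Finally, I would translate this back to the polynomial itself. By construction, the $k$-th block of $\underline{\hat{f}_{j}^{LS}} = \underline{M}\hat{\boldsymbol{b}}$ is the vector whose entries are $\hat{f}_{j}^{LS}(z_k)$ repeated $m$ times (since all $m$ rows of that block of $\underline{M}$ are $(1,z_k,z_k^2,\ldots,z_k^{n-1})$). Combining this with the previous step gives $\hat{f}_{j}^{LS}(z_k)=\hat{f}_j(z_k)$ for every $k\in\{1,\ldots,n\}$, which is the desired conclusion. There is no real obstacle here; the only bookkeeping to be careful about is the row ordering that makes the Kronecker-product identification legitimate, and the immediate consequence that the polynomial is constant within each block of rows of $\underline{M}$.
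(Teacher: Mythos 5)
Your proposal is correct and follows essentially the same route as the paper: both start from the identity $\underline{\hat{f}_{j}^{LS}}=\frac{1}{m}(I_n\otimes\boldsymbol{1}\boldsymbol{1}^{T})\boldsymbol{f}$ obtained in the proof of Theorem \ref{thm11} and read off that the block-diagonal averaging operator turns each block of $m$ function values into the pointwise mean $\hat{f}_j(z_k)$. Your write-up is in fact slightly more explicit than the paper's in spelling out the final step that the repeated entries of each block are the values $\hat{f}_{j}^{LS}(z_k)$, but the underlying argument is identical.
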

\begin{proof} Using Equation  (\ref{ls1}) we have that
\begin{eqnarray*}
\nonumber \underline{\hat{f}_{j}^{LS}} &=& \frac{1}{m} (I_{n} \otimes \boldsymbol{1}\boldsymbol{1}^{T}) \boldsymbol{f}\\
&=& \frac{1}{m}
\begin{pmatrix}
    J_{m} & 0_{m} &  \dots  & 0_{m} \\
    0_{m} & J_{m} & \dots  & 0_{m}\\
    \vdots & \vdots  & \ddots & \vdots \\
    0_{m} & 0_{m} & \dots  & J_{m} \\
\end{pmatrix}
\begin{pmatrix}
\boldsymbol{f}_1 \\
\boldsymbol{f}_2\\
\vdots\\
\boldsymbol{f}_n\\
\end{pmatrix} 
=
\begin{pmatrix}
\tilde{f}_{j,1}\\
\tilde{f}_{j,2}\\
\vdots\\
\tilde{f}_{j,n}\\
\end{pmatrix},
\end{eqnarray*}
where each element $J_{m}$ or $0_{m}$ is a square matrix of size $m \times m$ that contains all 1's or all 0's respectively and  
$\boldsymbol{f}_{k},\; k=1,\ldots,n$ is the $m \times 1$ vector of function evaluations $f(\boldsymbol{x})$  corresponding to 
$z_{k}$.
 \end{proof}

%

\noindent For fully projection regular point sets such as many of the LDS, including the rank-$1$ lattice rules, $m=1$ and using Algorithm I in such cases will imply fitting a polynomial of degree $(N-1)$  passing through all of the $N$ function projections. Such a polynomial will not approximate the desired shape accurately. Here, we propose a partitioning approach to overcome this problem. Suppose we partition $[0,1]$ into $n$ partitions, with breakpoints given by $0=z_0< z_1 < z_2< \ldots < z_{n-1}<z_n=1$. As above, we assume that the total number of points $N$ factorises as $N=nm$. Further, we assume the points are such that there are exactly $m$ points whose $j$-th component belongs to $[z_k,z_{k+1})$ for $0\le k\le n-1$. Note that these assumptions are not necessary for the validity of the theory, instead, they have been made only to simplify the notation. We provide below the modified algorithm with a partitioning step.\\ 

\noindent \begin{underline}{\textbf{Algorithm II ($m = 1$)}} \end{underline}
\begin{enumerate}
    \item {Evaluate the function $f$ at N points $\bsx_{i}.$}
    \item{For $j = 1,\ldots, s,$ do:}
    \begin{enumerate}
	\item {Project the function evaluations $f(\boldsymbol{x}_{i})$ on the $j^{th}$ axis.}
	\item{Partition $[0,1]$ into $n$ partitions, with breakpoints given by $0=z_0< z_1 < z_2< \ldots < z_{n-1}<z_n=1$.}
	\item {Fit a polynomial of degree $(n-1)$ to the projections.}
    \end{enumerate}
\item{Repeat for each $j$.}
\end{enumerate}  

\noindent Similar to (\ref{pointwise mean}), one can calculate
\begin{equation} \label{partition}
	\tilde{f}_{j,k} (z_k) =
	 \frac{1}{m}\sum_{\bsx_i: x_{i,j}\in[z_k,z_{k+1})} f(\bsx_i).
\end{equation}
Let $\tilde{f}_{j}^{LS}$ be the least square polynomial of degree $(n-1).$ Then, we can show that $\tilde{f}_{j}^{LS}$ will pass through $\tilde{f}_{j,k} (z_k)$ for each $k$.

\begin{thm} \label{thm122}
 For any $j \in \{1,\ldots, s\}$, $\tilde{f}_{j}^{LS}$ will pass through $\tilde{f}_{j,k}$ for  $0\le k\le n-1$.
\end{thm}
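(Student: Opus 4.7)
The plan is to reduce the partitioned least-squares problem to exactly the same Kronecker-product setup that drove the proof of Theorem~\ref{thm12}. After step~(b) of Algorithm~II, the $N = nm$ projected points are organised into $n$ blocks of size $m$, with the $k$-th block consisting of the points whose $j$-th coordinate lies in $[z_k, z_{k+1})$. To obtain the required structure I would use the breakpoint $z_k$ as the common abscissa representing every point of block $k$ when forming the design matrix, so that
$$\underline{M} = M \otimes \boldsymbol{1}_{(m\times 1)},$$
where $M$ is the $n\times n$ Vandermonde matrix in the distinct breakpoints $z_0,\ldots,z_{n-1}$, hence invertible.

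Once this structure is in hand, the argument is a verbatim repetition of the computation in Theorems~\ref{thm11}--\ref{thm12}. Using the mixed-product and inverse properties from Lemma~1,
$$\underline{\tilde{f}_{j}^{LS}} = \underline{M}(\underline{M}^{T}\underline{M})^{-1}\underline{M}^{T}\boldsymbol{f} = \bigl(M(M^{T}M)^{-1}M^{T}\bigr) \otimes \bigl(\boldsymbol{1}(\boldsymbol{1}^{T}\boldsymbol{1})^{-1}\boldsymbol{1}^{T}\bigr)\boldsymbol{f} = \tfrac{1}{m}\bigl(I_{n} \otimes \boldsymbol{1}\boldsymbol{1}^{T}\bigr)\boldsymbol{f},$$
since $M$ is invertible and $\boldsymbol{1}^{T}\boldsymbol{1}=m$. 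Reading off the $k$-th block of $m$ identical entries on the right gives $\frac{1}{m}\sum_{\bsx_{i}:\, x_{i,j}\in[z_{k},z_{k+1})} f(\bsx_{i})$, which is precisely $\tilde{f}_{j,k}(z_{k})$ by definition~(\ref{partition}). So the least-squares polynomial returns $\tilde{f}_{j,k}$ at $z_{k}$ for every $k \in \{0,\ldots,n-1\}$, as claimed.

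The only genuine obstacle is presentational rather than analytical: one must justify collapsing all $m$ abscissae within a partition to the common value $z_{k}$, and the uniformity hypothesis (exactly $m$ points per partition) is what makes this collapse clean. Once that convention for the design matrix is fixed, invertibility of the Vandermonde $M$ comes for free from distinctness of the breakpoints, and the Kronecker identities from Lemma~1 handle the rest without any new estimate beyond what already appears in the proof of Theorem~\ref{thm12}.
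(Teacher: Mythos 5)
Your proposal is correct and follows essentially the same route as the paper, whose entire proof of this theorem is the one-line remark that it is similar to Theorems~\ref{thm11} and~\ref{thm12}. You have in fact supplied the one detail the paper leaves implicit --- that the $m$ abscissae in each partition must be collapsed to the common representative $z_k$ so that the design matrix regains the Kronecker form $M \otimes \boldsymbol{1}$ --- after which the computation is indeed a verbatim repetition.
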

\begin{proof} The proof is similar to that of Theorems \ref{thm11} and \ref{thm12}.
\end{proof}

\noindent MC integration rules generate points that are fully projection regular $w.p.$ (\emph{with probability}) $\, 1.$ Therefore Algorithm II, approximation (\ref{partition}) and Theorem \ref{thm122} are also applicable when the function has been evaluated using a random point set.

\section{Convergence theorems} \label{convergence}

\subsection{For point sets where $m>1$}

The new approach described in the previous section essentially involves evaluating $f$ on a set of $N$ points in $[0,1]^s$ and then approximating the  one-dimensional function $f_j$ by fitting a least square polynomial through the orthogonal projections $ f(\bsx_i)$  of $f(\cdot)$ on the $j^{th}$ axis. Theorem \ref{thm12} proves that $\hat{f}_{j}^{LS}$ passes through the $n$ point-wise means $\hat{f}_j(z_k)$. This implies that this approach is equivalent to the interpolating polynomial approach where a polynomial of degree $(n-1)$ is fitted to $n$ function evaluations. Therefore the convergence properties can be studied using the relevant literature in numerical analysis.
We assumed that there were $N=n \times m$ points in $[0,1]^s$ such that
$f_j$ is approximated at $n$ distinct points $z_k$, $1\le k\le n$, and that for each unique value of $z_k$, there is a subset of $m$ points
whose $k$-th co-ordinate is equal to $z_k$.\\

The choice of the points $z_k$ is crucial and determines the convergence properties and the computational efficiency as discussed below.  The next theorem gives the convergence result when the $z_k$ are equidistant points (in a grid).

\begin{thm} \label{thm3}
Suppose that $f_j$ is infinitely differentiable such that
\[
\max_{\xi \in [0,1]} |f_j^{(n)}(\xi)| \leq C,\, \forall n,
\]
for some $ C<\infty$ such that $\frac{C}{(n-1)^n}\ll 1 ,\, \forall n$.
If the $z_k$ are equidistant points,
then $\hat{f}_{j}^{LS}\to f_j$ as $m\to\infty$ and $n\to\infty$.
\end{thm}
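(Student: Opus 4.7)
The plan is to decompose the total approximation error into the polynomial-interpolation error of $f_j$ on an exact data set, and an error-propagation term coming from the numerical-integration approximation of the pointwise values. By Theorem \ref{thm12}, $\hat{f}_j^{LS}$ coincides with the Lagrange interpolating polynomial of degree $n-1$ through the data points $(z_k, \hat{f}_j(z_k))$ for $1\le k\le n$. Let $q_n(x)$ denote the Lagrange interpolant of degree $n-1$ through the \emph{exact} values $(z_k, f_j(z_k))$. Then for every $x\in[0,1]$,
\[
\hat{f}_j^{LS}(x) - f_j(x) \;=\; \bigl[\hat{f}_j^{LS}(x) - q_n(x)\bigr] \;+\; \bigl[q_n(x) - f_j(x)\bigr].
\]

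For the second bracket I would invoke the classical Cauchy remainder for Lagrange interpolation,
\[
f_j(x) - q_n(x) \;=\; \frac{f_j^{(n)}(\xi_x)}{n!}\,\prod_{k=1}^n (x - z_k),
\]
for some $\xi_x\in(0,1)$, combined with the standard bound for $n$ equidistant nodes on $[0,1]$ with spacing $h=1/(n-1)$, namely $\max_{x\in[0,1]}\bigl|\prod_{k=1}^n(x-z_k)\bigr|\le \frac{h^n (n-1)!}{4}=\frac{(n-1)!}{4(n-1)^n}$. Using $|f_j^{(n)}|\le C$ this yields
\[
\max_{x\in[0,1]} |f_j(x) - q_n(x)| \;\le\; \frac{C}{4n(n-1)^n},
\]
which tends to $0$ as $n\to\infty$ thanks to the hypothesis $C/(n-1)^n\ll 1$.

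For the first bracket, note that $\hat{f}_j^{LS} - q_n$ is itself the Lagrange interpolant of degree $n-1$ through the error values $e_k:=\hat{f}_j(z_k)-f_j(z_k)$. Writing $\Lambda_n$ for the Lebesgue constant of the equidistant node set on $[0,1]$,
\[
\max_{x\in[0,1]} |\hat{f}_j^{LS}(x) - q_n(x)| \;\le\; \Lambda_n \max_{1\le k\le n} |e_k|.
\]
Since $\hat{f}_j(z_k)$ is an $m$-point integration-rule approximation of the $(s-1)$-dimensional integral defining $f_j(z_k)$, standard convergence results for Monte Carlo or QMC rules give $|e_k|\to 0$ as $m\to\infty$ for each fixed $k$; hence for any fixed $n$ this bracket vanishes as $m\to\infty$.

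Combining both estimates yields $\hat{f}_j^{LS}\to f_j$ in the natural iterated sense: first let $m\to\infty$ so that the first bracket vanishes for each fixed $n$, then let $n\to\infty$ so that the second bracket vanishes. The main obstacle, and the reason the hypotheses are naturally read as an iterated limit rather than joint convergence at arbitrary rates, is that the Lebesgue constant for equidistant nodes grows essentially like $\Lambda_n\sim 2^{n-1}/(n\log n)$; any attempt at genuinely joint convergence would require a coupling forcing $m$ to grow fast enough that $\Lambda_n\max_k|e_k|\to 0$, which the theorem as stated does not impose.
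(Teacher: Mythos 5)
Your proposal is correct and its core ingredients are the same as the paper's: Theorem \ref{thm12} to identify $\hat{f}_j^{LS}$ with an interpolating polynomial of degree $n-1$, the Cauchy remainder for Lagrange interpolation, the bound $\max_{z\in[0,1]}\prod_{k=1}^n|z-z_k|\le \frac{(n-1)!}{4}(n-1)^{-n}$ for equidistant nodes, and convergence of the $m$-point rule $\hat{f}_j(z_k)\to f_j(z_k)$ as $m\to\infty$. Where you differ is in the decomposition, and your version is the more careful one. The paper applies the interpolation remainder bound directly to $f_j-\hat{f}_j^{LS}$, which is only literally valid once $\hat{f}_j^{LS}$ interpolates the \emph{exact} values $f_j(z_k)$; it implicitly disposes of the discrepancy $\hat{f}_j(z_k)-f_j(z_k)$ by taking $m\to\infty$ first, without tracking how that data error propagates through the interpolation operator for finite $m$. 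You make this explicit by splitting off $\hat{f}_j^{LS}-q_n$, recognising it (by uniqueness of the degree-$(n-1)$ interpolant) as the interpolant of the errors $e_k$, and controlling it by the Lebesgue constant $\Lambda_n$. This buys a genuinely rigorous statement of the iterated limit, and your closing observation --- that $\Lambda_n$ grows exponentially for equidistant nodes, so the limit must be taken as $m\to\infty$ first for each fixed $n$ and cannot be made joint at arbitrary relative rates --- is a real insight the paper does not articulate. The only minor inaccuracy is attributing the convergence $e_k\to 0$ to ``Monte Carlo or QMC rules'': in this theorem the points come from a grid (the paper cites the Koksma--Hlawka inequality for the grid case), though the substance of the step is unaffected.
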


\begin{proof}

As $m\rightarrow \infty$,
\begin{equation}\label{eqn_converge}
\hat{f}_j(z_k) = \frac{1}{m}\sum_{\bsx_i: x_{i,j}=z_k} f(\bsx_i)
\rightarrow f_j(z_k).
\end{equation}
Equation (\ref{eqn_converge}) holds due to the Koksma-Hlawaka inequality
(\cite{nied92}) if the $\bsx_i$ are sampled using a grid.

\noindent For the interpolating polynomial of degree $n-1$,
it follows from a standard result in approximation theory (see for example, \cite{cheney99}, \cite{kress98}), that
\[
\max_{z\in[0,1]} |f_j(z) -\hat{f}_{j}^{LS}(z)| \leq \max_{\xi\in [0,1]} \frac{|f_j^{(n)}(\xi)|}{n!} \max_{z\in[0,1]} \prod_{k=1}^{n} |z-z_k|.
\]
This implies that
\begin{equation}\label{bound}
\max_{z\in[0,1]} |f_j(z) - \hat{f}_{j}^{LS}(z)| \leq \frac{C}{n!} \max_{z\in[0,1]} \prod_{k=1}^{n} |z-z_k|.
\end{equation}

It is known that (see for example, \cite{cheney99}) that if the $n$ points $z_k$ are equidistant on $[0,1]$, then
\[
\max_{z\in[0,1]} \prod_{k=1}^{n} |z-z_k| \leq \frac{(n-1)!}{4} \left(\frac{1}{n-1}\right )^{n}.
\]
From (\ref{bound}), we then have
\[
\max_{z\in[0,1]} |f_j(z) - \hat{f}_{j}^{LS}(z)|
\leq \frac{C}{4n(n-1)^n}.
\]
The assumption that $\frac{C}{(n-1)^n}\ll 1$ for all $n$
then implies that as $m \rightarrow \infty$ and $n \rightarrow \infty$, $\hat{f}_{j}^{LS}
\to f_j$.
\end{proof}

If the function $f_{j}$ is $n$ times differentiable then the result in Theorem \ref{thm3} indicate that the approximation obtained using $\hat{f}_{j}^{LS}$ will still be good as long as the derivatives are sufficiently bounded. 

\subsection{For fully projection regular point sets where $m=1$}

Theorem \ref{thm3} provides the conditions under which $\hat{f}_{j}^{LS} \to f_j$ for grids constructed using equidistant points. Now, we show that the polynomial approximation will converge to the shape of the true one dimensional function  if the function was explored using LDS instead of a grid. 

\begin{thm} \label{thm5}
Let $h_k = z_{k+1}-z_k$  using the partitions defined in Algorithm II, and points $\bsx_{i}$ sampled using a QMC integration rule. 
If $\tilde{f}_{j,k}$ is as given in (\ref{partition}), then
$\tilde{f}_{j,k}\to f_j(z_k)$ as $m\to\infty$ and $h_k\to 0$.
\end{thm}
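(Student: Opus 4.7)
The plan is to decompose the convergence into a QMC integration step (as $m \to \infty$) followed by a localisation step (as $h_k \to 0$). First I would rewrite the sum $\tilde{f}_{j,k}$ as a full $N$-point QMC approximation of a slab integral. Assuming equi-sized partitions where $h_k = 1/n$ and hence $N/m = n = 1/h_k$, one has
\[
\tilde{f}_{j,k} = \frac{1}{m} \sum_{\bsx_i : x_{i,j} \in [z_k, z_{k+1})} f(\bsx_i) = \frac{1}{h_k} \cdot \frac{1}{N} \sum_{i=1}^N \chi_{S_k}(\bsx_i) f(\bsx_i),
\]
where $\chi_{S_k}$ is the indicator function of the slab $S_k = [0,1]^{j-1} \times [z_k, z_{k+1}) \times [0,1]^{s-j}$. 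The inner sum is a standard QMC estimate of $\int_{[0,1]^s} \chi_{S_k}(\bsx) f(\bsx) \, \rd\bsx = \int_{S_k} f(\bsx) \, \rd\bsx$.

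For the first limit I would assume $f$ has finite Hardy--Krause variation, so that $\chi_{S_k} f$ does also for each fixed $k$. Since the $\bsx_i$ come from an LDS, $D_N^* \to 0$ as $N \to \infty$. The Koksma--Hlawka inequality then gives
\[
\frac{1}{N} \sum_{i=1}^N \chi_{S_k}(\bsx_i) f(\bsx_i) \to \int_{S_k} f(\bsx) \, \rd\bsx = \int_{z_k}^{z_{k+1}} f_j(t) \, \rd t
\]
as $m \to \infty$ with $h_k$ fixed, where the last equality uses Fubini. Dividing by $h_k$ yields $\tilde{f}_{j,k} \to \frac{1}{h_k} \int_{z_k}^{z_{k+1}} f_j(t) \, \rd t$.

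For the second limit, as $h_k \to 0$ the average value of $f_j$ on $[z_k, z_{k+1})$ converges to $f_j(z_k)$, by continuity of $f_j$ (inherited from smoothness of $f$) or, more generally, by the Lebesgue differentiation theorem for integrable $f_j$. Chaining the two limits gives $\tilde{f}_{j,k} \to f_j(z_k)$, as required.

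The main obstacle is the interaction between the two limits: the Koksma--Hlawka bound effectively carries a factor of $1/h_k$ owing to the normalisation in the first display, so a genuinely joint limit would require $D_N^*$ to decay faster than $h_k$. The simplest workaround is to take the iterated limit (first $m \to \infty$ with $h_k$ fixed, then $h_k \to 0$), which sidesteps this rate issue and is consistent with how convergence results such as Theorem \ref{thm3} are phrased in this paper.
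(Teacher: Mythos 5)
Your proposal is correct and follows essentially the same route as the paper's proof: interpret $\tilde{f}_{j,k}$ as a QMC estimate of the normalised slab integral, apply Koksma--Hlawka for the $m\to\infty$ limit, use Fubini to reduce to $\frac{1}{h_k}\int_{z_k}^{z_{k+1}} f_j(t)\,\rd t$, and let $h_k\to 0$. You actually supply more detail than the paper does --- the explicit indicator-function rewriting, the finite Hardy--Krause variation of $\chi_{S_k} f$, and the observation that the limit must be taken iteratively rather than jointly --- all of which the paper's proof leaves implicit.
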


\begin{proof}
One may consider $\tilde{f}_{j,k}$ as an approximation to the integral
\begin{equation}
\label{eqn_converge2}
\frac{1}{h_k} \int_{[0,1]^{s-1}} \int_{z_k}^{z_{k+1}} f(\bsx)\,\rd x_j\,\rd\bsx_{-j}.
\end{equation}
As $m\to\infty$, $\tilde{f}_{j,k}$ converges to this integral due to the Koksma-Hlawaka inequality
(\cite{nied92}). For the integral in (\ref{eqn_converge2}), we can swap the order of
integration by Fubini's theorem since $f$ is integrable and
Lebesgue measure is a $\sigma$-finite measure. So the integral becomes
\[
\frac{1}{h_k}\int_{z_k}^{z_{k+1}}\int_{[0,1]^{s-1}} f(\bsx)\,\rd\bsx_{-j}\,\rd x_j=
\frac{1}{h_k}\int_{z_k}^{z_{k+1}}f_j(x_j)\,\rd x_j.
\]
Letting $h_k\to 0$, it follows from the definition of derivative that
this integral converges to $f_j(z_k)$.
\end{proof}


\begin{thm} \label{thm6}
Suppose that $f_j$ is infinitely differentiable such that
\[
\max_{\xi \in [0,1]} |f_j^{(n)}(\xi)| \leq C,\, \forall n,
\]
for some $ C<\infty$ such that $\frac{C}{(n-1)^n}\ll 1 ,\, \forall n$.
If the $z_k$ are equidistant points and points $\bsx$ sampled using a QMC integration rule,
then $\tilde{f}_{j}^{LS}\to f_j$ as $m\to\infty$ and $n\to\infty$.
\end{thm}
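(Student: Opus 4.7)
The plan is to mirror the proof of Theorem \ref{thm3}, with Theorem \ref{thm5} playing the role previously played by the direct Koksma--Hlawka step. By Theorem \ref{thm122}, $\tilde{f}_j^{LS}$ is the Lagrange interpolant of degree $(n-1)$ through the data $\{(z_k,\tilde{f}_{j,k})\}$. Because the nodes are equidistant, $h_k=1/n\to 0$ precisely when $n\to\infty$, so Theorem \ref{thm5} yields $\tilde{f}_{j,k}\to f_j(z_k)$ in the joint limit $m,n\to\infty$. Once the data converges, one would like to conclude that the interpolating polynomial through the data converges to the interpolant through the exact values $f_j(z_k)$, and then invoke the interpolation-error bound used in Theorem \ref{thm3} to pass from that interpolant to $f_j$ itself.

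To make this precise I would introduce the ``ideal'' Lagrange interpolant $p_n$ of degree $(n-1)$ to $f_j$ at the equidistant nodes $\{z_k\}$ and split via the triangle inequality
\[
\max_{z\in[0,1]}|f_j(z)-\tilde{f}_j^{LS}(z)|
\;\le\;
\max_{z\in[0,1]}|f_j(z)-p_n(z)|
+
\max_{z\in[0,1]}|p_n(z)-\tilde{f}_j^{LS}(z)|.
\]
The first term is exactly the quantity bounded in Theorem \ref{thm3}: under the stated hypotheses it is at most $C/[4n(n-1)^n]$, which tends to $0$ as $n\to\infty$. For the second term, $p_n-\tilde{f}_j^{LS}$ is a polynomial of degree $\le n-1$ equal to $f_j(z_k)-\tilde{f}_{j,k}$ at each $z_k$, so via the Lagrange basis expansion its sup-norm is dominated by $\Lambda_n\max_k|f_j(z_k)-\tilde{f}_{j,k}|$, where $\Lambda_n$ denotes the Lebesgue constant of the equidistant grid.

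The principal obstacle is that $\Lambda_n$ is well known to grow super-exponentially with $n$ for equidistant nodes, so the convergence $\max_k|f_j(z_k)-\tilde{f}_{j,k}|\to 0$ supplied by Theorem \ref{thm5} is not \emph{per se} sufficient to make the second term vanish. In the qualitative spirit of Theorem \ref{thm3}, the joint limit $m,n\to\infty$ must be interpreted as letting $m$ grow fast enough relative to $n$ to swamp $\Lambda_n$; the Koksma--Hlawka rate $O\bigl(m^{-1}(\log m)^{s-1}\bigr)$ implicit in Theorem \ref{thm5} provides an explicit schedule along which $\Lambda_n$ times the data error still vanishes. With that understanding both terms in the split tend to zero and the claim follows; a fully quantitative version would require carefully tracking the balance between the Koksma--Hlawka rate and $\Lambda_n$, but this refinement is not needed for the statement at hand.
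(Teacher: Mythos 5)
Your proposal takes essentially the same route as the paper: the paper's entire proof of Theorem \ref{thm6} is the single sentence ``The result follows from Theorems \ref{thm3}, \ref{thm122} and \ref{thm5},'' and your triangle-inequality decomposition through the ideal interpolant $p_n$ is precisely the argument that one-liner implicitly relies on. So in terms of approach you have matched the paper, and in fact made explicit the two-step structure (interpolation error of $p_n$, plus propagation of the data error $f_j(z_k)-\tilde{f}_{j,k}$ through the Lagrange basis) that the paper leaves entirely unstated.

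Your observation about the Lebesgue constant is the valuable part, and it is not a defect of your write-up but of the theorem as stated: for equidistant nodes $\Lambda_n$ grows like $2^n/(n\log n)$, so the mere qualitative convergence $\tilde{f}_{j,k}\to f_j(z_k)$ delivered by Theorem \ref{thm5} does not by itself kill the second term. The paper never addresses this; its proof silently assumes that convergence of the nodal data implies convergence of the interpolant. Your suggested repair --- reading the joint limit $m,n\to\infty$ as a schedule in which $m$ grows fast enough that the Koksma--Hlawka rate beats $\Lambda_n$ --- is the natural way to make the claim true, though it does quietly strengthen the hypothesis from ``$m\to\infty$ and $n\to\infty$'' to a coupled limit. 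If you wanted to be fully rigorous you would need to state that coupling explicitly (e.g.\ $\Lambda_n\,\max_k|f_j(z_k)-\tilde{f}_{j,k}|\to 0$ as a standing assumption on how $m$ and $n$ tend to infinity together), but as a reconstruction of what the paper intends, your argument is faithful and in fact more honest than the original.
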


\begin{proof}
The result follows from Theorems \ref{thm3}, \ref{thm122} and \ref{thm5}.
\end{proof}

Note that if the function $f_{j}$ is $n$ times differentiable then the results in Theorem~\ref{thm6} indicate that the approximation obtained using $\tilde{f}_{j}^{LS}$ will still be good as long as the derivatives are sufficiently bounded.

\subsection{For random point sets}

As pointed out in Section \ref{approxlds}, Algorithm II, approximation (\ref{partition}) and Theorem \ref{thm122} are also applicable when the function has been evaluated using a random point set. We provide the corresponding result for this case. 

\begin{thm} \label{thm7}
Let $h_k = z_{k+1}-z_k$  using the partitions defined in Algorithm II, and points $\bsx$ sampled using a MC integration rule. 
If $\tilde{f}_{j,k}$ is as given in (\ref{partition}), then
$\tilde{f}_{j,k}\to f_j(z_k)\, w.p.\, 1$ as $m\to\infty,$ and $h_k\to 0$.
\end{thm}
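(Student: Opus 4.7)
The plan is to mimic the proof of Theorem \ref{thm5}, replacing the Koksma--Hlawka inequality (which yields deterministic convergence for QMC points) with the Strong Law of Large Numbers (which yields almost sure convergence for MC points). The structure of the argument is otherwise identical: first pass $m \to \infty$ to identify $\tilde{f}_{j,k}$ with an average over a strip, then use Fubini to reduce to a one-dimensional integral over $[z_k, z_{k+1})$, and finally let $h_k \to 0$ to recover $f_j(z_k)$.

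First, I would observe that when the $\bsx_i$ are sampled using an MC rule, they are i.i.d.\ uniform on $[0,1]^s$. Conditional on the $j$-th component lying in $[z_k, z_{k+1})$, the resulting points are i.i.d.\ uniform on $[0,1]^{j-1} \times [z_k,z_{k+1}) \times [0,1]^{s-j}$. Thus $\tilde{f}_{j,k}$, as defined in (\ref{partition}), is the sample mean of $f$ evaluated at $m$ i.i.d.\ draws from this conditional distribution. Since $f$ is integrable on $[0,1]^s$, it is integrable on the strip, and the Strong Law of Large Numbers gives, with probability $1$,
\[
\tilde{f}_{j,k} \;\longrightarrow\; \frac{1}{h_k} \int_{[0,1]^{s-1}} \int_{z_k}^{z_{k+1}} f(\bsx)\,\rd x_j\,\rd\bsx_{-j}
\quad \text{as } m\to\infty.
\]

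Next, as in the proof of Theorem \ref{thm5}, I would invoke Fubini's theorem (justified because $f$ is integrable and Lebesgue measure is $\sigma$-finite) to swap the order of integration and rewrite the right-hand side as
\[
\frac{1}{h_k}\int_{z_k}^{z_{k+1}} f_j(x_j)\,\rd x_j.
\]
Letting $h_k \to 0$, the definition of derivative (or, equivalently, Lebesgue's differentiation theorem applied to the integrable function $f_j$) then gives that this one-dimensional integral converges to $f_j(z_k)$. Combining the two limits yields $\tilde{f}_{j,k} \to f_j(z_k)$ w.p.\ $1$ as $m\to\infty$ and $h_k\to 0$.

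The main subtlety, as opposed to Theorem \ref{thm5}, is bookkeeping of the almost-sure statement under a joint limit. I would handle this by taking the limits iteratively: fix the partition (hence $h_k$) and apply the SLLN to obtain a null set $\mathcal{N}_k$ off of which the first convergence holds; since the partition is taken through a countable sequence of refinements (e.g., dyadic), the union of the corresponding null sets is still null, so the overall conclusion holds w.p.\ $1$. The second limit ($h_k \to 0$) is deterministic and therefore preserves the w.p.\ $1$ qualifier, so no further probabilistic care is needed there.
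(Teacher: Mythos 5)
Your proposal is correct and follows essentially the same route as the paper: replace the Koksma--Hlawka argument of Theorem \ref{thm5} with the strong law of large numbers to get almost-sure convergence to the strip integral, then reuse the Fubini and $h_k\to 0$ steps unchanged. Your extra bookkeeping of the null sets over a countable sequence of partition refinements is a welcome addition of rigour that the paper's one-line proof omits, but it does not change the underlying argument.
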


\begin{proof}
Proof is similar to Theorem \ref{thm5} except that as $m\to\infty$, $\tilde{f}_{j,k}$ converges to the integral (\ref{eqn_converge2}) $w.p.\, 1$ because of the law of large numbers. The rest of the proof is exactly the same.
\end{proof}

\begin{thm} \label{thm8}
Suppose that $f_j$ is infinitely differentiable such that
\[
\max_{\xi \in [0,1]} |f_j^{(n)}(\xi)| \leq C,\, \forall n,
\]
for some $ C<\infty$ such that $\frac{C}{(n-1)^n}\ll 1 ,\, \forall n$.
If the $z_k$ are equidistant points and points $\bsx$ sampled using a MC integration rule,
then $\tilde{f}_{j}^{LS}\to f_j$ as $m\to\infty$ and $n\to\infty$.
\end{thm}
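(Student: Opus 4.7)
The plan is to obtain Theorem~\ref{thm8} as an essentially verbatim adaptation of Theorem~\ref{thm6}, substituting Theorem~\ref{thm7} (the Monte Carlo version) for Theorem~\ref{thm5} (the QMC version) at the single place where the sampling mechanism enters. Since Theorem~\ref{thm6} was already proved by chaining Theorems~\ref{thm3}, \ref{thm122}, and \ref{thm5}, the corresponding chain here is Theorems~\ref{thm3}, \ref{thm122}, and \ref{thm7}, with every ``$\to$'' replaced by ``$\to$ w.p.\ 1.''

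Concretely, I would first invoke Theorem~\ref{thm122} to reduce the problem: $\tilde{f}_j^{LS}$ is the degree-$(n-1)$ polynomial that interpolates the $n$ values $\tilde{f}_{j,k}$ at the equidistant nodes $z_k$. Next, I would apply Theorem~\ref{thm7} under the current hypotheses (equidistant nodes give $h_k = 1/n \to 0$ as $n \to \infty$, and the MC sampling is in force), concluding that $\tilde{f}_{j,k} \to f_j(z_k)$ w.p.\ 1 as $m,n\to\infty$ for each $k$. Finally, I would combine this with the interpolation-error estimate used inside Theorem~\ref{thm3}: because the $z_k$ are equidistant and $f_j$ satisfies the derivative bound with $C/(n-1)^n \ll 1$, the standard bound
\[
\max_{z\in[0,1]} |f_j(z) - \tilde{f}_j^{LS}(z)| \leq \frac{C}{4n(n-1)^n}
\]
tends to zero as $n\to\infty$, so the polynomial interpolant of the true values $f_j(z_k)$ converges uniformly to $f_j$; adding the w.p.\ 1 node error from Theorem~\ref{thm7} (which is a bounded perturbation of each of the $n$ interpolation values and feeds linearly through the interpolation operator on the fixed equidistant nodes) preserves convergence with probability one.

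The only nontrivial point, and the one I would be most careful about, is ensuring that the \emph{two} limits $m\to\infty$ and $n\to\infty$ interact correctly in the w.p.~1 statement. For each fixed $n$, Theorem~\ref{thm7} delivers convergence w.p.~1 at each of the $n$ nodes as $m\to\infty$, hence (by a finite union of null sets) simultaneous w.p.~1 convergence of all $n$ node values. Letting $n\to\infty$ afterward, the interpolation bound from Theorem~\ref{thm3} controls the polynomial error uniformly on $[0,1]$. The cleanest write-up is therefore to take the limits sequentially (first $m\to\infty$ for each fixed $n$, then $n\to\infty$), exactly mirroring the structure of Theorem~\ref{thm6}'s one-line proof, and to simply remark that the Koksma--Hlawka step is replaced by the strong law of large numbers invoked in Theorem~\ref{thm7}, which is the sole source of the ``w.p.\ 1'' qualifier.
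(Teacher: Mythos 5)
Your proposal follows exactly the paper's route: the paper's entire proof of this theorem is the one line ``The result follows from Theorems \ref{thm3}, \ref{thm122} and \ref{thm7},'' which is precisely the chain you describe. Your additional care about the order of the limits and the finite union of null sets for the w.p.\ 1 statement is a useful elaboration of details the paper leaves implicit, but the underlying argument is the same.
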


\begin{proof}
The result follows from Theorems \ref{thm3}, \ref{thm122} and \ref{thm7}.
\end{proof}

Note that if the function $f_{j}$ is $n$ times differentiable then the results in Theorem~\ref{thm8} indicate that the approximation obtained using $\tilde{f}_{j}^{LS}$ will still be good as long as the derivatives are sufficiently bounded.

\section{Numerical Examples}\label{examples}

The algorithms proposed in Section \ref{approxlds} can be used when a function is explored using a grid, MC or QMC integration rules. However, because this work was motivated by the need to develop a method for QMC integration rules (and no other method exists, to our best knowledge), we focus on QMC integration rules in the examples below. Wherever possible, we also compare the results against those obtained using a grid. Since this problem was motivated by a possible application in the Bayesian statistical inference, we  illustrate  using a few standard probability distributions. 

The integration rules used are known as Korobov lattice rules. These
are rank-$1$ lattice rules in which the generating vector $\bsz$ in
(\ref{gen}) is of the form
\[
\bsz=(1,\alpha,\alpha^2,\ldots,\alpha^{s-1}),
\]
where $\alpha$ is an integer in $\{1,2,\ldots, N-1\}$. Appropriate
choices of $\alpha$ may be found by using the Lattice Builder software (see
\cite{lecuyer16}).

\subsection{Exponential distribution}

Most statistical distributions are smooth with bounded derivatives and therefore satisfy the smoothness requirements of Theorems \ref{thm3}, \ref{thm6} and \ref{thm7}. Here, we illustrate how the exponential distribution, for example, satisfies these smoothness conditions. The Exponential distribution is slightly different since the derivative does not exist at zero. However, here we show that it still satisfies the smoothness conditions imposed by Theorems \ref{thm3}, \ref{thm6} and \ref{thm7}. Suppose that the $j$-th one dimensional distribution is exponential with parameter $\lambda$. Then we have that,
\[
f_j(x) = \lambda e^{-\lambda x};
\]
the $n^{th}$ derivative is given by
\[
f_j^{(n)}(x) = (-1)^{n}\lambda^{n+1} e^{-\lambda x},
\]
and
\[
\sup_{x} |f_j^{(n)}(x)| = \lim_{x \rightarrow 0+} |f_j^{(n)}(x)| = \lambda^{n+1}.
\]
We assume here that the interval of interest is $[0,b)$ for some $b<\infty,$ $b$ large enough so that $\int_{0}^{b} f_j(x)\, dx \approx 1.$  Note that the convergence results proved in Section \ref{convergence} are applicable here since the function can be linearly transformed to be defined over $[0,1]$. Then, $\exists n'>0$ and $c<1$ such that $\forall n> n'+1, \, \frac{b}{n-1} \leq \frac{1}{nc} < 1$. Further, for any $\lambda < \infty, \, \exists n'' > n'$ such that, $\forall n>n'', \lambda^{n+1}\left (\frac{1}{nc}\right )^{n} \ll 1$.

\begin{figure} [h]
\includegraphics[scale=0.7]{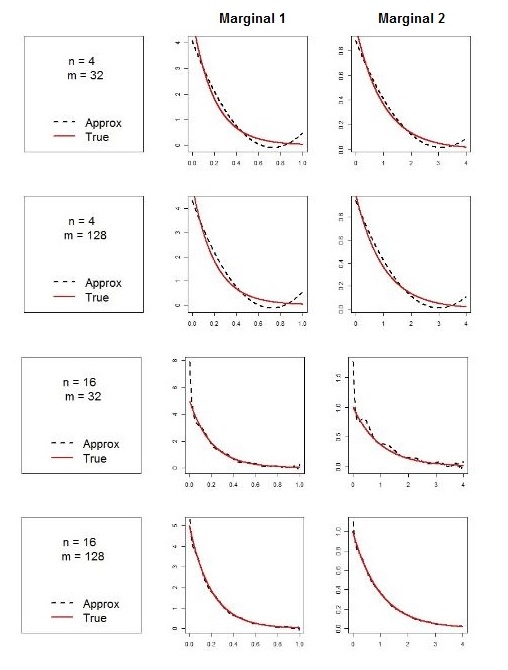}
\caption[]{ Least squares approximation to the Exponential marginals using Korobov lattices as $n$ and $m$ increase.}
\label{fig_Exp}
\end{figure}

Thus, it can be seen that conditions for Theorem \ref{thm6} are satisfied and $\tilde{f}_{j}^{LS} \to f_j$ as $m\rightarrow \infty$ and $n\rightarrow \infty$. This is illustrated in Figure \ref{fig_Exp}. Here, the joint distribution is bi-variate and is a product of two Exponential distributions. We find the least squares approximations to the marginals using Korobov lattices with different $n$ and $m$, the convergence is achieved as they both increase.

\subsection{ Multi-modal and skewed distributions}

Figures \ref{fig_multi} and \ref{fig_beta} illustrate that a grid is quite inefficient at accurately capturing the shape of the distribution even in low dimensional problems, especially when it is multi-modal or heavily skewed. Here, we consider a multi-modal distribution and the Beta distribution, in four variables,  and try to approximate the shape of the marginals using the grid points (and fitting the interpolant through pointwise means) as well as using LDS points and our new method of fitting the least squares polynomials of degree $(n-1)$ through the orthogonal projections of the joint distribution on the marginals proposed in this paper.

Figure \ref{fig_multi} shows that the marginals approximated using the Korobov lattice with $4096$ points are very accurate whereas the approximation using an $8-$point grid with the same number of points ($8^4 = 4096$) is not as accurate. Figure \ref{fig_beta} illustrates that the approximations to Beta marginals using a $1024$ point Korobov lattice are much more accurate than the approximations obtained using grids with $6^4 = 1296$ or even $8^4 = 4096$ points. Thus, using LDS enables efficient and more accurate approximation of the shape of  the one-dimensional distributions.

\begin{figure} [h]
\includegraphics[scale=0.35]{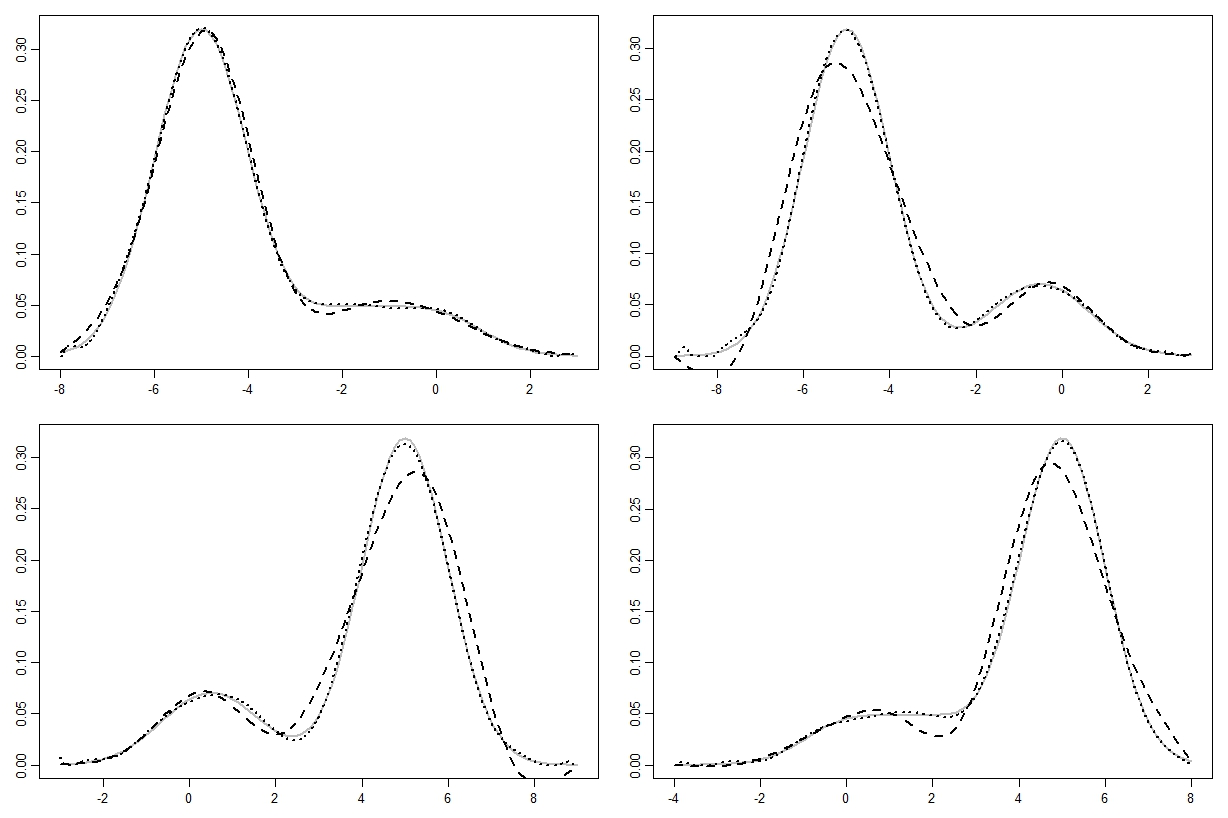}
\caption[]{Approximating marginals of a four-dimensional multi-modal distribution (line) using: Korobov lattices with $4096$ points (dotted) and an $8-$point grid with $4096$ points (dashed).}
\label{fig_multi}
\end{figure}

\begin{figure} [h]
\includegraphics[scale=0.35]{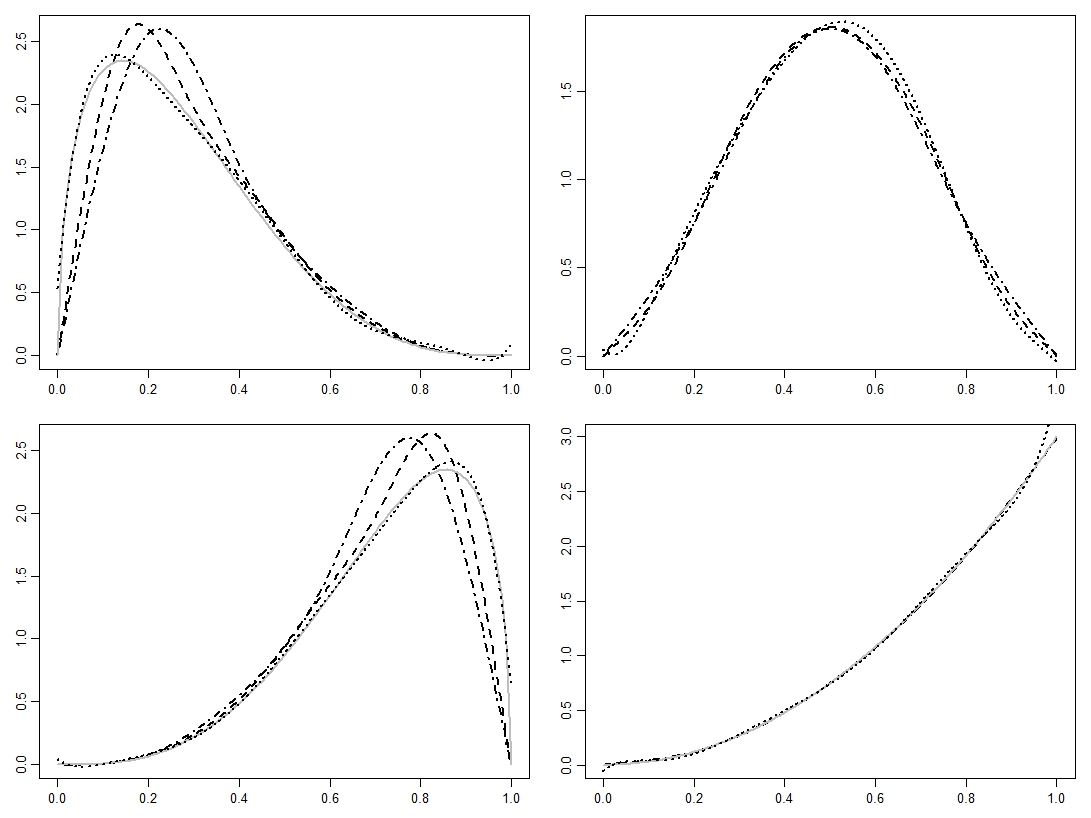}
\caption[]{Approximating marginals of a four-dimensional Beta distribution (line) using: Korobov lattices with $1024$ points (dotted), a $6-$point grid with $1296$ points (dash-dotted) and a $8-$point grid with $4096$ points (dashed).}
\label{fig_beta}
\end{figure}

\subsection{ High-dimensional distributions} \label{gamma-ex}

To illustrate the real computational benefit of using low discrepancy sequences, we consider two distributions of dimensions $10$ and $12$ respectively. These distributions have been generated as products of independent Gamma distributions with different parameters. A $5$-point grid will require $5^{10} = 9,765,625$ points in $10$ dimensions and $244,140,625$ points in $12$ dimensions and will likely still yield inaccurate estimates, as illustrated by an inability of $n-$point grids to capture various shapes when $n$ is small in Figures \ref{fig_multi} and \ref{fig_beta}. 

\begin{figure}
\includegraphics[scale=0.36]{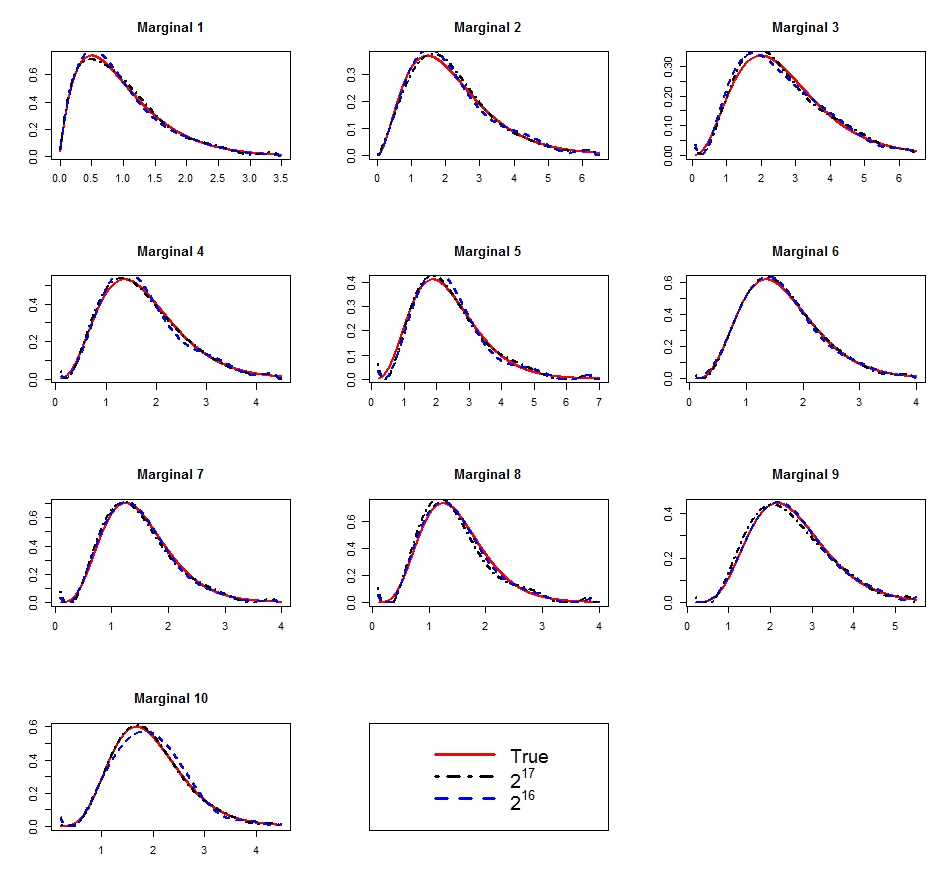}
\caption[]{$10$-dimensional Gamma using Korobov lattice with i) $2^{16} = 65,536$  (dashed) and ii) $2^{17}=131,072$ points (dotted).}
\label{fig_10d}
\end{figure}

Figure \ref{fig_10d} shows that for $s=10$, very accurate estimates can be obtained using LDS with as little as $2^{16}$ points ($150$ times fewer than a $5$-point grid). Although estimates obtained using $2^{17}$ points are even more accurate, the difference between the two is very small suggesting that our estimates have started to converge to the true marginals. For $12$-dimensional Gamma, $2^{16}$ points give reasonably accurate estimates and the convergence is achieved by $2^{19} (= 524,288)$ points as can be seen in Figure \ref{fig_12d}. However, this is negligible compared to the $244$ million points required for a $5$-point grid.

\begin{figure}
\includegraphics[scale=0.36]{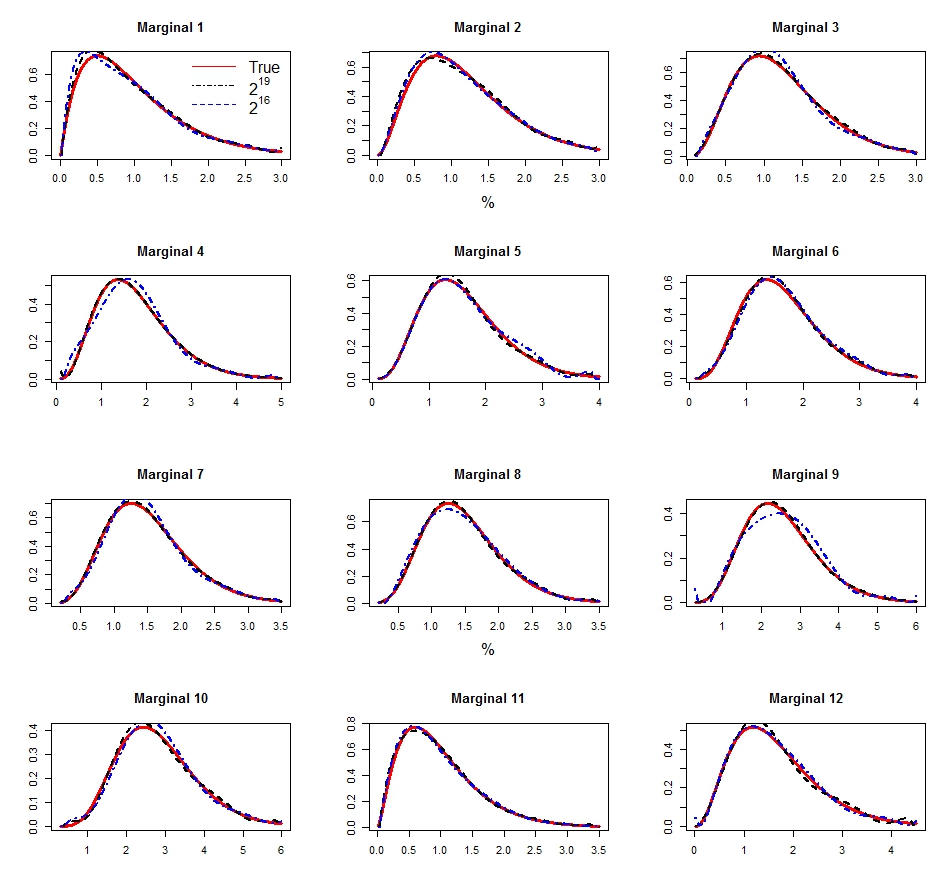}
\caption[]{$12$-dimensional Gamma using Korobov lattice with i) $2^{16} = 65,536$ (dashed) and ii) $2^{19}=524,288$ points (dotted).}
\label{fig_12d}
\end{figure}

\section{Summary and Discussion}\label{discussion}

This paper proposes a new method to approximate the shape of one dimensional functions $f_{j},$ where,  $f_j$ is the function obtained by integrating out all the variables of an $s-$dimensional function $f$ except the $j$-th one and where the function has been explored using a point set. Not only is this method easy and computationally efficient but also, it can be used when the function is evaluated using the grid, the MC or the QMC integration rules. To our best knowledge, a formal method to solve this problem has not been proposed yet, especially for QMC integration rules.  The method  uses a least squares polynomial smoother. We propose two algorithms - two versions of the method -  one where the point set used are fully projection regular (or fully projection regular $w.p.\, 1$, in case of MC rules) and the other when this is not the case. We prove the convergence properties for both these algorithms. We show that implementing our new method using LDS points only requires $O(mn)$ function evaluations, compared to the traditional grid based approaches that require $O(n^{s})$ function evaluations. Typically, $m < n^{(s-1)}$ and therefore implementing our new method using LDS points is computationally more efficient than using an $n$ point grid. Further, the examples illustrate that our method also produces more accurate approximation than using the traditional grid based approach.\\

The need to develop such a method was motivated by a potential application in  Bayesian statistics, specifically, in computational methods that explore the posterior distribution using a set of deterministic point sets as discussed in Section \ref{motivate}.  However, practical challenges will need to be overcome before the method developed here can be incorporated within the computation Bayesian methods. For instance, the proposed method provides asymptotic guarantees as  the number of points and the degree of the polynomial go to infinity. However, it cannot specify the number of points and the degree of the polynomial needed to achive a reasonable approximation for a given function or indeed for a wide range of functions (class of all continuous probability distributions, for example). Thus further work will be required to develop a method that can potentially improve the computational efficiency of Bayesian methods using QMC integration rules.\\

However, to the best of our knowledge, this paper presents the first formal method developed to approximate the shape of the one-dimensional function obtained by integrating out all other variables using LDS.

\section*{Acknowledgements}
Paul Brown's research has been funded by a University of Waikato doctoral scholarship.



\end{document}